\numberwithin{equation}{section}
\newcommand{\define}{\textbf}
\newcommand{\mc}{\mathcal}
\newcommand{\vp}{\varphi}
\newcommand{\set}[1]{\{#1\}}
\newcommand{\suppressthis}[1]{}
\newcommand{\N}{\mathbb N}
\newtheorem{theorem}{Theorem}[section]
\newtheorem{lemma}[theorem]{Lemma}
\theoremstyle{definition}
\newtheorem{example}[theorem]{Example}
\newtheorem{corollary}[theorem]{Corollary}
\newtheorem{definition}[theorem]{Definition}
\DeclareMathOperator{\bool}{Bool}
\DeclareMathOperator{\chain}{Chain}
\begin{document}
\title{A Ramsey Theorem for Graded Lattices}

\author{Abhishek Khetan}
\address{School
	of Mathematics, Tata Institute of Fundamental Research. 1, Homi Bhabha Road, Mumbai-400005, India}

\email{khetan@math.tifr.res.in}

\author{Amitava Bhattacharya}
\address{School
	of Mathematics, Tata Institute of Fundamental Research. 1, Homi Bhabha Road, Mumbai-400005, India}

\email{amitavabhattacharya@protonmail.com}
\subjclass[2010]{05D10}
\keywords{poset, lattice, coloring, homothety}

\maketitle

\begin{abstract}
	We develop a Van der Waerden type theorem in an axiomatic setting of graded lattices and show that this axiomatic formulation can be applied to various lattices, for instance the set partition and the Boolean lattices.
	We derive the Hales-Jewett theorem as a corollary.
\end{abstract}
\tableofcontents

\section{Introduction}

Van der Waerden Theorem is one of the most important and basic results in Ramsey Theory.
It states that if $k$ and $r$ are positive integers, then there is a natural number $N$ such that whenever $\chi:[N]\to [k]$ is a $k$-coloring of the first $N$ natural numbers, there is contained in $[N]$ an arithmetic progression of length $r$ each of whose elements have the same color.
There are many proofs of this result and it has led to many developments in Ramsey theory.
See \cite{graham_rothschild_book} and \cite{graham_rothschild_paper}.

One can interpret Van der Waerden  theorem in the language of posets as follows.
Let $A_n$ denote the linear poset $\set{1, \ldots, n}$ where the poset relation is the usual comparison relation on the natural numbers.
What Van der Waerden theorem says is that if the linear poset $A_N$ is colored with $k$-colors then there is a monochromatic `homothetic' copy of $A_r$ sitting in $A_N$, provided $N$ is sufficiently large.
It is thus natural to ask if one can prove Van der Waerden like theorems for other posets, for instance, the Boolean and the set partition posets.

The notion of homothety does not make sense for all posets (See \cite{stanley_vol_1} for basic notions about posets).
For this we need the posets to be graded.
Roughly speaking, if $P$ and $Q$ are graded posets then a 
homothety of $P$ into $Q$ is a `translated and scaled' copy of $P$ in $Q$ (See Definition \ref{definition:homothety}).
Both the Boolean and the set partition posets are graded posets.
Moreover, both of them are lattices.
This raises the following natural question.
Suppose a large enough Boolean (partition) lattice is colored with $k$-colors, then can we find a monochromatic homothetic copy of a given small Boolean (partition) lattice in the large Boolean lattice.
Further, we would like to insist that this embedding of the smaller lattice into the large lattice preserves the lattice relations.
Our main theorem (Theorem \ref{theorem:main}) shows that this is indeed the case.

A key feature of these lattices is `self-similarity'.
A large Boolean (partition) lattice contains copies of smaller Boolean (partition) lattices.
Thus one would like to prove a general theorem about sequences of lattices which satisfy some kind of self similarity condition.
To this end, we define a notion of a \emph{homothety system} (See Definition \ref{definition:homothety system}) on a sequence of graded lattices in Section \ref{section:definitions and examples} and prove our main theorem in this abstract setting.
Examples \ref{example:partition lattice sequence is good}, \ref{example:boolean lattice is good}, \ref{exmaple:division lattice is good}, \ref{example:chain lattice is good} show that the division lattice, Boolean lattice, chain lattice, and the set partition lattice satisfy these axioms under the \emph{trivial homothety system} (See Definition \ref{definition:trivial homothety system}).

A substantial generalization of the Van der Waerden type results is the Hales-Jewett theorem proved in \cite{hales_jewett_original}.
We deduce the Hales-Jewett theorem in Section \ref{section:hales jewett theorem} as a corollary to our main theorem.
This is done by considering a non-trivial homothety system on the chain lattices.
The Hales-Jewett theorem is precisely the statement that one gets by applying Theorem \ref{theorem:main} to this system.
We hope that more applications can be found using other lattice, especially the set partition lattice.
The case of the set partition lattice is especially interesting due to a theorem proved in \cite{pudlak_toma} which states that every lattice can be embedded in the partition lattice.

Various authors have developed Ramsey theorems for posets.
See for instance \cite{trotter_cubes}, \cite{trotter_ramsey}, \cite{axenovich_walzer}.
However, to the best of the knowledge of the authors, the homothetic and lattice theoretic aspects have not been considered in literature.

\section{Definitions and Examples}
\label{section:definitions and examples}

We will write $\N$ to denote the set of all the positive integers and $\N_0$ to denote the set of all the non-negative integers.

\begin{definition}
	\label{definition:homothety}
	Let $P$ and $Q$ be graded posets with rank functions $r_P$ and $r_Q$ respectively.
	We say that a map $f:P\to Q$ is a \define{homothety} if $f$ satisfies the following properties
	\begin{enumerate}[a)]
		\item $f(p)\leq f(p')$ if and only if $p\leq p'$.
		\item There is $d\geq 1$, which we call the \define{scale factor} of $f$, such that $r_Q(f(p)) - r_Q(f(p')) = d(r_P(p) - r_P(p'))$ for all $p, p'\in P$.
	\end{enumerate}
\end{definition}

Recall that if $P$ and $Q$ are graded posets with rank functions $r_P$ and $r_Q$ respectively, then $r:P\times Q\to \N_0$ defined as $r(p, q) = r_P(p) + r_Q(q)$ is a rank function on $P\times Q$, and hence the product is also a graded poset.
Thus if $f_1:P\to Q_1$ and $f_2:P\to Q_2$ are homotheties, then so is the map $f:P\to Q_1\times Q_2$ which takes $p\in P$ to $(f_1(p), f_2(p))$.
Also note that the composition of two homotheties is again a homothety.

\begin{definition}
	\label{definition:lattice homothety}
	Suppose $P$ and $Q$ are graded lattices.
	A map $f:P\to Q$ is said to be a \define{lattice homothety} if $f$ is a homothety and satisfies the condition
	\begin{equation}
		f(p\vee p') = f(p) \vee f(p') \quad \text{and} \quad f(p\wedge p') = f(p) \wedge f(p')
	\end{equation}
	for all $p, p'\in P$.
\end{definition}

Recall that if $Q_1$ and $Q_2$ are lattices then so is $Q_1\times Q_2$, where
\begin{equation}
	(q_1, q_2)\vee (q_1', q_2) = (q_1\vee q_1', q_2\vee q_2') \quad \text{and} \quad
	(q_1, q_2)\wedge (q_1', q_2) = (q_1\wedge q_1', q_2\wedge q_2')
\end{equation}
Thus if $P, Q_1$ and $Q_2$ are graded lattices and $f_1:P\to Q_1$ and $f_2:P\to Q_2$ are lattice homotheties, then so is the map $f:P\to Q_1\times Q_2$ taking $p$ to $(f_1(p), f_2(p))$.
A special case is the diagonal map $P\to P\times P$ which takes $p$ to $(p, p)$ is a lattice homothety whenever $P$ is a graded lattice.
Again, the composition of two lattice homotheties is a lattice homothety.

\begin{definition}
	\label{definition:homothety system}
	\label{definition:trivial homothety system}
	Let $A(1), A(2), A(3), \ldots$ be a sequence of graded lattices.
	For each $i$ and $j$ let $H_{ij}$ be a collection of lattice homotheties from $A(i)$ to $A(j)$.
	We say that $\mc H=\set{H_{ij}:\ i, j\geq 1}$ is a \define{homothety system} on $A(1), A(2), A(3), \ldots$ if
	\begin{enumerate}
		\item[H1.] The identity map $A(i)\to A(i)$ is in $H_{ii}$ for all $i$.
		\item[H2.] Whenever $\vp\in H_{ij}$ and $\psi\in H_{jk}$, we have $\psi\circ\vp\in H_{ik}$.
	\end{enumerate}
	We will refer to the elements of $\mc H$ as $\mc H$-\define{restricted lattice homotheties}, or simply as \define{restricted lattice homotheties} when there is no ambiguity about the homothety system under consideration.
	When each $H_{ij}$ consists of all possible lattice homotheties between $A(i)$ and $A(j)$, we call $\mc H$ as the \define{trivial homothety system}.
	Despite the name, the trivial homothety system is of interest.
\end{definition}

\begin{definition}
	Let $A(1), A(2), A(3), \ldots$ be a sequence of graded lattices equipped with a homothety system $\mc H$.
	Let $m, n, N\geq 1$ and $\vp:A(m)\times A(n)\to A(N)$  be a lattice homothety.
	We say that $\vp$ is \define{compatible} with $\mc H$ the following two conditions are satisfies.
	\begin{enumerate}
		\item[C1.] Whenever $A(i)\to A(m)\times A(n)$ is a lattice homothety such that each of the compositions
	\begin{equation}
		A(i) \to A(m)\times A(n) \xrightarrow{\text{proj}_1} A(m) \quad \text{ and }\quad A(i) \to A(m)\times A(n)\xrightarrow{\text{proj}_2} A(n)
	\end{equation}
	are restricted lattice homothety, the composition
	\begin{equation}
		A(i)\to A(m)\times A(n)\xrightarrow{\vp} A(N)
	\end{equation}
	is also a restricted lattice homothety.

		\item[C2.] For any $p\in A(m)$ and $q\in A(n)$, the maps $\vp_p:A(n)\to A(N)$, $\vp_p(y)=\vp(p, y)$ and $\vp_q:A(m)\to A(N)$, $\vp_q(x) = \vp(x, q)$ are both restricted lattice homotheties.
	\end{enumerate}
	Note that when $\mc H$ is the trivial homothety system then every lattice homothety $A(m)\times A(n)\to A(N)$ is compatible with $\mc H$.
\end{definition}

\begin{definition}
	We say that a sequence $A(1), A(2), A(3), \ldots$ of graded lattices equipped with a homothety system $\mc H$ is \define{good} if for all $N_1, N_2\geq 1$, there is $N$ such that there is a lattice homothety from $A(N_1)\times A(N_2)$ into $A(N)$ which is compatible with $\mc H$.
\end{definition}

\begin{definition}
	We say that a sequence $A(1), A(2), A(3), \ldots$ of graded lattices is \define{good} if it is good when it is equipped with the trivial homothety system.
\end{definition}

\begin{example}
	\label{example:partition lattice sequence is good}
	Let $F$ be a finite set.
	We write $\Pi(F)$ to denote the collection of all partitions of $F$ where we write $\pi\leq \tau$ for two partitions $\pi$ and $\tau$ of $A$ if $\tau$ is a refinement of $\pi$.
	Then $r:\Pi(F)\to \N$ defined as $r(\pi) = |\pi|$ is a grading on $\Pi(F)$.
	Also, if $\pi$ and $\tau$ are arbitrary partitions of $F$, then there is a unique coarsest partition which refines both $\pi$ and $\tau$, and there is a unique finest partition which both $\pi$ and $\tau$ refine.
	These are defined to be the join and meet of $\pi$ and $\tau$ respectively, thus endowing $\Pi(F)$ with the structure of a graded lattice and we refer to $\Pi(F)$ as the \define{partition lattice} on $F$.
	We write $\Pi(n)$ to denote $\Pi(\set{1, \ldots, n})$.

	For two disjoint finite sets $F$ and $G$, there is a natural map $\Pi(F)\times \Pi(G)\to \Pi(F\sqcup G)$ which sends $(\pi, \tau)$ to $\pi\cup \tau$.
	It is easy to check that this is a lattice homothety.
	From this observation one sees that the sequence $\Pi(1), \Pi(2), \Pi(3), \ldots,$ is a good sequence.
\end{example}

\begin{example}
	\label{example:boolean lattice is good}
	For any finite set $F$ we write $\bool(F)$ to denote the set of all the subsets of $F$.
	For $S, T\in \bool(F)$, we write $S\leq T$ if $S\subseteq T$, which endows it with a poset structure.
	It is easily checked that the function $r:\bool(F)\to \N_0$ taking $S$ to $|S|$ is a grading on $\bool(S)$.
	Also, the operations of union and intersection play the role of join and meet respectively.
	Thus $\bool(F)$ is a graded lattice which we will refer to as the \define{Boolean lattice} on $F$.
	We write $\bool(n)$ to denote $\bool(\set{1, \ldots, n})$.

	When $F$ and $G$ are two disjoint sets, there is a natural map $\bool(F)\times \bool(G)\to \bool(F\sqcup G)$ taking $(S, T)$ to $S\cup T$.
	It is clear that this is a lattice homothety and thus the sequence $\bool(1), \bool(2), \bool(3), \ldots$ is a good sequence.
\end{example}

\begin{example}
\label{exmaple:division lattice is good}
	For a positive integer $n$ let $D_n$ denote the set of all the non-negative integers which divide $n$.
	For $a, b$ in $D_n$, we write $a\leq b$ if $a|b$.
	The function $r:D_n\to \N_0$ defined as
	\begin{equation}
		a = p_1^{k_1} \cdots p_r^{k_r} \mapsto k_1+ \cdots + k_r
	\end{equation}
	where $p_i$'s are primes, is a rank function on $D_n$.
	Also, the least common multiple and the greatest common divisor serve as join and meet of any two elements.
	Thus $D_n$ is a graded lattice which we refer to as the \define{division lattice} corresponding to $n$.
	Now let $m = p_1^{k_1} \cdots p_r^{k_r}$ and $n=q_1^{\ell_1} \cdots q_s^{\ell_s}$ be two positive integers, where    $p_i$'s and $q_j$'s are distinct primes.
	Let $u_1, \ldots, u_r, v_1, \ldots, v_s$ be pairwise distinct primes and define
	\begin{equation}
		N = u_1^{k_1} \cdots u_r^{k_r} \cdot v_1^{\ell_1} \cdots v_s^{\ell_s}
	\end{equation}
	Consider the map $f:D_m\times D_n\to D_N$ defined as
	\begin{equation}
		f(p_1^{\varepsilon_1} \cdots p_r^{\varepsilon_r},\ q_1^{\delta_1} \cdots q_s^{\delta_s}) = u_1^{\varepsilon_1} \cdots u_r^{\varepsilon_r} \cdot v_1^{\delta_1} \cdots v_s^{\delta_s}
	\end{equation}
	Then $f$ is a lattice homothety, which shows that the sequence $D_1, D_2, D_3, \ldots$ is a good sequence.
\end{example}

\begin{example}
\label{example:chain lattice is good}
	Fix a positive integer $k$.
	Define $\chain_k(F)$ as the set of all maps $F\to \set{0, \ldots, k-1}$.
	We write $f\leq g$ for two elements $f$ and $g$ in $\chain_k(F)$ if $f(x)\leq g(x)$ for all $x\in F$.
	This gives a poset structure to $\chain_k(F)$.
	The map $r:\chain_k(F)\to \N_0$ defined as $r(f) = \sum_{x\in F}f(x)$ is a grading on $\chain_k(F)$.
	Finally, for two elements $f$ and $g$ in $\chain_k(F)$, we have
	\begin{equation}
		f\vee g = \max(f, g)\quad \text{and} \quad f\wedge g = \min(f, g)
	\end{equation}
	serving as meet and join of $f$ and $g$.
	Thus $\chain_k(F)$ is a graded lattice.
	We will refer to $\chain_k(F)$ as the $k$-th \define{chain lattice} over $F$.

	Now let $F$ and $G$ be two disjoint sets.
	Consider the map $\chain_k(F)\times \chain_k(G)\to \chain_k(F\sqcup G)$ which takes $(f, g)$ to the element $h\in \chain_k(F\sqcup G)$ defined as
	\begin{equation}
		h(x)
		=
		\left\{
		\begin{array}{lcl}
			f(x) \text{ if } x\in F\\
			g(x) \text{ if } x\in G
		\end{array}
		\right.
	\end{equation}
	It can be checked that this is a lattice homothety, which shows that the sequence
	$$
	    \chain_k(1), \chain_k(2), \chain_k(3), \ldots
	$$
	is a good sequence for any $k$.
	Note that $\chain_2(n)$ is same as $\bool(n)$.
\end{example}

\section{Main theorem}

Throughout this section let $A(1), A(2), A(3), \ldots$ be a sequence of graded lattices equipped with a homothety system $\mc H$
and assume that this system is good.

\begin{theorem}
	\label{theorem:main}
	Let $n$ and $k$ be positive integers.
	Then there is a positive integer $N$ such that whenever $\chi:A(N)\to [k]$ is a $k$-coloring of $A(N)$, there is a restricted lattice homothety $h:A(n)\to A(N)$ whose image is $\chi$-monochromatic.
\end{theorem}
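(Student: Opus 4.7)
My plan is to prove the theorem by induction on $n$, modeled on the classical color-focusing proofs of Van der Waerden's and Hales--Jewett's theorems and adapted to the abstract setting. The goodness hypothesis supplies the product embeddings that play the role of concatenation in the classical arguments, while the compatibility conditions C1 and C2 ensure that slices and compositions produced along the way remain in the restricted class of lattice homotheties.

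For the base case $n=1$, the goal is to produce a monochromatic restricted lattice homothetic copy of $A(1)$ in some $A(N)$. Iterated applications of goodness yield, for arbitrarily large $j$, a compatible lattice homothety $\varphi : A(1)^{j} \to A(N_{j})$; by C2 every coordinate slice is a restricted lattice homothety $A(1) \to A(N_{j})$. A pulled-back $k$-coloring of $A(N_{j})$ assigns to each $(j-1)$-tuple of the other coordinates a coloring pattern of $A(1)$, taking one of at most $k^{|A(1)|}$ values. Iterating this pattern reduction (with further goodness applications shrinking to sub-products) and applying pigeonhole produces, for $j$ sufficiently large, a single coordinate slice with monochromatic image.

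For the inductive step, assume the theorem for all $n' < n$ and arbitrary numbers of colors. Given $k$, pick $M$ via the induction hypothesis so that any $k^{|A(n)|}$-coloring of $A(M)$ contains a monochromatic restricted homothetic copy of $A(n-1)$, and use goodness to fix a compatible $\varphi : A(n) \times A(M) \to A(N)$. Given $\chi : A(N) \to [k]$, condition C2 makes each slice $\varphi_{q} : A(n) \to A(N)$ a restricted homothety; encoding its induced coloring as an element of $[k]^{A(n)}$ defines an auxiliary $k^{|A(n)|}$-coloring $\widetilde{\chi}$ of $A(M)$. Applying the induction hypothesis to $\widetilde{\chi}$ gives a restricted lattice homothety $\psi : A(n-1) \to A(M)$ along whose image every slice of $\varphi$ produces the same coloring pattern $\pi \in [k]^{A(n)}$; the composition $\varphi \circ (\mathrm{id}_{A(n)} \times \psi) : A(n) \times A(n-1) \to A(N)$ is then restricted by C1. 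A focusing step exploits this aligned configuration together with the lattice structure on $A(n) \times A(n-1)$ to extract a single restricted lattice homothety $A(n) \to A(N)$ whose image is $\chi$-monochromatic.

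The principal obstacle is this focusing step: the aligned parallel slices share a fixed but possibly non-constant pattern $\pi$, and one must leverage the lattice operations together with C1 to promote this aligned-pattern configuration into a genuinely monochromatic restricted homothety $A(n) \to A(N)$ rather than merely $A(n-1) \to A(N)$. Unlike in the classical Van der Waerden setting, there is no extrinsic "shift and scale" available, so the monochromatic copy of $A(n)$ must be constructed intrinsically from the product homothety $\varphi$ supplied by goodness. A secondary difficulty is the base case, which lacks a trivial anchor and requires a standalone iteration plus pigeonhole, and the bookkeeping needed to calibrate the nested induction parameters $M$, $N$, and the color counts $k^{|A(n)|}$ so that each application of goodness and of the induction hypothesis actually closes.
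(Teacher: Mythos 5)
Your overall instinct---color focusing, product embeddings supplied by goodness, auxiliary colorings with $k^{|A(\cdot)|}$ colors, and C1/C2 to keep everything restricted---matches the spirit of the paper's argument, but the induction you chose cannot be carried out, and the step you flag as ``the principal obstacle'' is exactly the missing idea rather than a technical detail. First, inducting on $n$ is problematic in this abstract setting: the axioms give no relationship whatsoever between $A(n-1)$ and $A(n)$ (no embedding, no quotient, nothing), so a restricted lattice homothety $A(n-1)\to A(M)$ obtained from the inductive hypothesis has no canonical way of being ``extended by one dimension'' to a map out of $A(n)$. Second, and more seriously, your focusing step as described fails: after applying the inductive hypothesis to the auxiliary coloring $\widetilde\chi$ you obtain parallel slices all inducing the same pattern $\pi\in[k]^{A(n)}$, but $\pi$ may be non-constant, and having many aligned copies of a non-constant pattern does not by itself produce a monochromatic copy of $A(n)$. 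You correctly sense that something must ``promote'' the aligned configuration, but you do not supply that mechanism.

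The paper's proof supplies it by inducting on a different variable: the size $s$ of a subset $S\subseteq A(n)$ (an ``admissible triple'' $(S;p,p')$) whose image, minus the top point $p'$, is required to be monochromatic, together with a secondary parameter $\ell$ counting a chain of focused points $q_1<\cdots<q_\ell$ in the target. The quantity $L_n(s,k,\ell)$ records when such configurations are forced. Two lemmas drive the induction: one uses a product embedding from goodness to lengthen the chain of focused points ($\ell\mapsto\ell+1$), at the cost of exponentiating the number of colors; the other uses pigeonhole on $\ell=k+1$ focused points to absorb the distinguished point $p'$ into the monochromatic set, growing $s$ by one. Iterating until $s=|A(n)|$ yields the theorem. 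This is the device that converts ``monochromatic except at one point'' into ``monochromatic,'' one element of $A(n)$ at a time, and it is what your proposal would need in place of the unexecuted focusing step. (Your base case is also more complicated than necessary: for $s=2$ the set $S\setminus\{p'\}$ is a single point, so one only needs the existence of some restricted homothety $A(n)\to A(N)$, which follows from goodness composed with the diagonal.)
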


Before getting into the proof of the above theorem we develop some definitions.
Let $P$ be a graded poset.
Let $S\subseteq P$ and $p, p'\in P$ be arbitrary.
We say that the triple $(S;p, p')$ is \define{admissible} if 
\begin{enumerate}[a)]
	\item $S$ has a minimum element.
	\item Both $p$ and $p'$ lie in $S$ and are distinct.
	\item $p$ is the minimum element of $S$. 
\end{enumerate}

Let $P$ and $Q$ be graded lattices and $\chi:Q\to [k]$ be a $k$-coloring of $Q$.
Let $q, q'$ be two elements in $Q$ such that $q<q'$ and $(S; p, p')$ be an admissible triple in $P$.
A $((S;p\to q, p'\to q'), \chi)$-\define{lattice homothety} from $P$ to $Q$ is a lattice homothety $f:P\to Q$ such that
\begin{enumerate}[a)]
	\item $f(p) = q$ and $f(p') = q'$.
	\item $f(S)\setminus \set{q'}$ is $\chi$-monochromatic.
\end{enumerate}

Let $k, \ell, n$ and $s$ be positive integers such that $n \geq s$ and $\ell, s\geq 2$.
We define $L_n(s, k, \ell)$ as the smallest positive integer (if it exists) $N$ such that whenever $(S; p, p')$ is an admissible triple in $A(n)$ with $|S| = s$, and $\chi:A(N)\to [k]$ is a $k$-coloring, then there is a sequence $q_1< \cdots< q_\ell$ in $A(N)$ such that for all $1\leq i< j\leq \ell$ there is a $((S;p\to q_i, p'\to q_j), \chi)$-restricted lattice homothety from $A(n)$ to $A(N)$.


\begin{lemma}
	\label{lemma:step 1}
	If $L_n(s, k, \ell)$ and $L_n(s, k^{B(N_1)}, 2)$ exist then $L_n(s,k,\ell+1)$ exists, where $N_1 = L_n(s, k, \ell)$ and $B(N_1)$ is the size of $A (N_1)$.
\end{lemma}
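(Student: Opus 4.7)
The plan is a standard product-Ramsey argument exploiting the two hypotheses in sequence. Set $N_1=L_n(s,k,\ell)$, $N_2=L_n(s,k^{B(N_1)},2)$, and use the goodness of the system to obtain $N$ together with a compatible lattice homothety $\vp:A(N_1)\times A(N_2)\to A(N)$. Given a coloring $\chi:A(N)\to[k]$, pull it back to $\chi':A(N_1)\times A(N_2)\to[k]$ via $\chi'(x,y)=\chi(\vp(x,y))$.

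The first reduction is the usual slice trick. For each $y\in A(N_2)$ let $\chi'_y:A(N_1)\to[k]$ be the slice $\chi'_y(x)=\chi'(x,y)$, and define $\tilde\chi:A(N_2)\to[k]^{A(N_1)}$ by $\tilde\chi(y)=\chi'_y$; since $|A(N_1)|=B(N_1)$, this is a $k^{B(N_1)}$-coloring. Applying $L_n(s,k^{B(N_1)},2)$ produces $y_1<y_2$ in $A(N_2)$ and a restricted lattice homothety $g:A(n)\to A(N_2)$ with $g(p)=y_1$, $g(p')=y_2$, such that $\chi'_y=\chi'_{y_1}$ for every $y\in g(S)\setminus\{y_2\}$. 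Now $\chi''(x):=\chi'_{y_1}(x)$ is a $k$-coloring of $A(N_1)$, so by hypothesis $N_1=L_n(s,k,\ell)$ yields a chain $x_1<\cdots<x_\ell$ in $A(N_1)$ together with restricted lattice homotheties $f_{ij}:A(n)\to A(N_1)$ for $1\le i<j\le \ell$ witnessing the $L_n$-property with respect to $\chi''$.

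Next, define the target chain in $A(N)$ by $q_i=\vp(x_i,y_1)$ for $i=1,\dots,\ell$ and $q_{\ell+1}=\vp(x_\ell,y_2)$; order-preservation of $\vp$ shows $q_1<\cdots<q_{\ell+1}$. Construct the required homotheties $h_{ij}:A(n)\to A(N)$ for $1\le i<j\le\ell+1$ in three cases. When $j\le\ell$, take $h_{ij}=\vp_{y_1}\circ f_{ij}$, which is restricted via C2 and H2. When $j=\ell+1$ and $i<\ell$, take $h_{i,\ell+1}=\vp\circ(f_{i,\ell},g)$, which is restricted by C1 because both coordinate projections of $(f_{i,\ell},g)$ are the restricted homotheties $f_{i,\ell}$ and $g$. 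When $j=\ell+1$ and $i=\ell$, take $h_{\ell,\ell+1}=\vp_{x_\ell}\circ g$, again restricted via C2 and H2. In each case the equalities $h_{ij}(p)=q_i$ and $h_{ij}(p')=q_j$ come from direct substitution. For monochromaticity of $h_{ij}(S)\setminus\{q_j\}$, one computes for $t\in S\setminus\{p'\}$ that $\chi(h_{ij}(t))=\chi'_{g(t)}(f_{ij}(t))$ in case one (with the obvious analogues in the other two cases); by the slice-constancy from step one this equals $\chi''(f_{ij}(t))$, which is constant in $t$ by the $\chi''$-monochromaticity of $f_{ij}(S)\setminus\{x_j\}$ (respectively $f_{i,\ell}(S)\setminus\{x_\ell\}$), and in case 2b simply reduces to the single value $\chi''(x_\ell)$.

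The main obstacle is not conceptual but bookkeeping: one must confirm at each step that the relevant composites remain inside $\mc H$, which is exactly what the compatibility axioms C1 and C2 were crafted to guarantee, and one must invoke injectivity of homotheties (an immediate consequence of the order-isomorphism condition (a) in Definition \ref{definition:homothety}) to equate $h_{ij}(S)\setminus\{q_j\}$ with $h_{ij}(S\setminus\{p'\})$ so that the color computations above deliver monochromaticity on the full intended set.
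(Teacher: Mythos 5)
Your proposal is correct and follows essentially the same route as the paper's proof: the same slice-coloring reduction via $L_n(s,k^{B(N_1)},2)$ then $L_n(s,k,\ell)$, the same chain $\vp(x_1,y_1)<\cdots<\vp(x_\ell,y_1)<\vp(x_\ell,y_2)$, and the same three-case construction of the witnessing homotheties using C2, C1, C2 respectively. The only addition is your explicit remark about injectivity identifying $h_{ij}(S)\setminus\{q_j\}$ with $h_{ij}(S\setminus\{p'\})$, a point the paper leaves implicit.
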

\begin{proof}
	Assume $N_1:=L_n(s, k, \ell)$ and $N_2:=L_n(s, k^{B(N_1)}, 2)$ exist.
	By definition of a good sequence there is $N$ such that there is a lattice homothety $\textbf{i}:A(N_1)\times A(N_2)\to A(N)$ compatible with $\mc H$.
	Fix an admissible triple $(S; p, p')$ in $A(n)$ with $|S|=s$ and a $k$-coloring $\chi:A(N)\to [k]$.
	We will show that there is a sequence $q_1< \cdots < q_{\ell+1}$ in $A(N)$ such that for each $1\leq i< j\leq \ell +1$, there is a $((S; p\to q_i, p'\to q_j), \chi)$-restricted lattice homothety from $A(n)$ to $A(N)$.

	The coloring $\chi$ gives an induced $k^{B(N_1)}$ coloring of $A (N_2)$ since the composite $\chi\circ \textbf{i}$ can be thought of as a map $c:A(N_2)\to [k]^{A(N_1)}$.
	For each $y\in A(N_2)$ we thus have a map $c_y:A(N_1)\to [k]$ which takes $x\in A(N_1)$ to $\chi(x, y)$.
	Now using the fact that $L_n(s, k^{B(N_1)}, 2)$ exists, we get $b<b'$ in $A(N_2)$ such that there is a $((S; p\to b, p'\to b'), c)$-restricted lattice homothety $h_2:A(n)\to A(N_2)$.
	Thus $c_y$ is same for all $y\in h_2(S)\setminus \set{b'}$.
	Let $\psi:A(N_1)\to [k]$ denote $c_y$ for any such $y$.
	The existence of $L_n(s, k, \ell)$ now gives a sequence $p_1< \cdots < p_\ell$ in $A(N_1)$ such that for all $1\leq i<j\leq \ell$ there is a $((S; p\to p_i, p'\to p_j), \psi)$-restricted lattice homothety $h_1^{ij}: A(n) \to A(N_1)$.
	Now consider the list
	\begin{equation}
		\textbf{i}(p_1, b) < \textbf{i}(p_2, b) < \cdots < \textbf{i}(p_\ell, b) < \textbf{i}(p_\ell, b')
	\end{equation}
	in $A(N)$.
	We claim that for any two points $y$ and $y'$ with $y< y'$ in this list, there is a $((S; p\to y, p'\to y'), \chi)$-restricted lattice homothety.
	We consider three cases.
	
	\emph{Case 1: Suppose $y=\textbf{i}(p_i, b)$ and $y'=\textbf{i}(p_j, b)$ for some $i<j$.}
	Consider the map $h:A(n) \to A(N)$ defined as $h(x) = \textbf{i}(h_1^{ij}(x), b)$ for all $x\in S$.
	By using (C2) it is clear that this map is a $((S; p\to y, p'\to y', \chi)$-restricted lattice homothety.

	\emph{Case 2: Suppose $y=\textbf{i}(p_i, b)$ and $y'=\textbf{i}(p_\ell, b')$ for some $i<\ell$.}
	Consider the map $h:A(n) \to A(N)$ defined as
	\begin{equation}
		h(x) = \textbf{i}(h_1^{i\ell}(x), h_2(x))
	\end{equation}
	Now if $x, x'\in S$ with $x'\neq p'$, then we have
	\begin{equation}
		\chi\circ \textbf{i}(h_1^{i\ell}(x), h_2(x))
		= c_{h_2(x)}(h_1^{i\ell}(x))
		= \psi(h_1^{i\ell}(x))
		= \psi(h_1^{i\ell}(x'))
		\end{equation}
		giving
	\begin{equation}
		\chi\circ \textbf{i}(h_1^{i\ell}(x), h_2(x))
		= c_{h_2(x')}(h_1^{i\ell}(x'))
		= \chi\circ \textbf{i}(h_1^{i\ell}(x'), h_2(x'))
	\end{equation}
	Thus, using (C1), we see that $h$ is a $((S; p\to y, p'\to y'), \chi)$-restricted lattice homothety and we are done.

	\emph{Case 3: Suppose $y=\textbf{i}(p_\ell, b)$ and $y'=\textbf{i}(p_\ell, b')$.}
	Consider the map $h:A(n)\to A(N)$ defined as $h(x) = \textbf{i}(p_\ell, h_2(x))$ for each $x\in A(n)$.
	Then $h$ is a restricted lattice homothety by (C2).
	We show that $h(S)\setminus{\set{y'}}$ is $\chi$-monochromatic.
	So we need to show that if $x$ and $x'$ are in $S$ with $x, x'\neq p'$, then $h(x)$ and $h(x')$ receive the same color under $\chi$.
	Indeed,
	\begin{equation}
		\chi(h(x)) = \chi\circ \textbf{i}(p_\ell, h_2(x)) = c_{h_2(x)}(p_\ell) = c_{h_2(x')}(p_\ell) = \chi\circ \textbf{i}(p_\ell, h_2(x')) = \chi(h(x'))
	\end{equation}
	Thus $h$ is a $((S; p\to y, p'\to y'), \chi)$-restricted lattice homothety and we are done.
\end{proof}

\begin{lemma}
	\label{lemma:step 2}
	If $L_n(s,k,k+1)$ exists for some $2\leq s<n$ and some $k\geq 1$ then $L_n(s+1,k,2)$ exists.
\end{lemma}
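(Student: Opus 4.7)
The plan is to show that $N := L_n(s,k,k+1)$ itself witnesses $L_n(s+1,k,2)$; the entire argument is a single pigeonhole step. Fix an admissible triple $(S;p,p')$ in $A(n)$ with $|S|=s+1\geq 3$ and a $k$-coloring $\chi\colon A(N)\to[k]$. Since $|S|\geq 3$, pick an auxiliary element $t\in S\setminus\{p,p'\}$ and set $S_0:=S\setminus\{p'\}$. Then $(S_0;p,t)$ is an admissible triple of size $s$, because $p$ remains the minimum of $S_0$ and $t\in S_0\setminus\{p\}$.

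Applying the existence of $L_n(s,k,k+1)$ to the triple $(S_0;p,t)$ and the coloring $\chi$ produces a chain $q_1<\cdots<q_{k+1}$ in $A(N)$ together with, for each $1\leq i<j\leq k+1$, a $((S_0;p\to q_i,t\to q_j),\chi)$-restricted lattice homothety $h_{ij}\colon A(n)\to A(N)$. By the defining monochromaticity property, $h_{ij}(S_0)\setminus\{q_j\}$ is $\chi$-monochromatic; since $p\in S_0\setminus\{t\}$ and $h_{ij}(p)=q_i$, the common color of that set must equal $\chi(q_i)$.

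Next, apply the pigeonhole principle to the $k+1$ values $\chi(q_1),\ldots,\chi(q_{k+1})\in[k]$ to obtain indices $1\leq a<b\leq k+1$ with $\chi(q_a)=\chi(q_b)$. I claim that $h:=h_{ab}$, together with $y_1:=q_a$ and $y_2:=h(p')$, is the desired $((S;p\to y_1,p'\to y_2),\chi)$-restricted lattice homothety. Indeed, $h$ is a restricted lattice homothety, $h(p)=y_1$ and $h(p')=y_2$ by construction, and $y_1<y_2$ because $p<p'$ (as $p$ is the minimum of $S$ and distinct from $p'$) and $h$ is order-preserving. Since $h$ is injective — being an order embedding — we have $h(S)\setminus\{y_2\}=h(S_0)$, and this set decomposes as $\bigl(h(S_0)\setminus\{q_b\}\bigr)\cup\{q_b\}$; the first piece is monochromatic of color $\chi(q_a)$ by the previous paragraph, while $q_b$ itself has color $\chi(q_b)=\chi(q_a)$ by the pigeonhole choice, so $h(S_0)$ is monochromatic. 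This gives $L_n(s+1,k,2)\leq N$.

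There is no real obstacle: the point of asking for a chain of length $k+1$ in the hypothesis is precisely that $k+1$ colors from $[k]$ must collide, and that collision is exactly what promotes an $s$-element monochromatic image on $S_0$ to an $(s+1)$-element monochromatic image on $S_0\cup\{t\}$, which in turn realizes $S$ as monochromatic after removing $y_2=h(p')$. The condition $s<n$ in the hypothesis is used only to ensure an admissible triple of size $s+1$ fits inside $A(n)$, and the condition $s\geq 2$ ensures $|S|\geq 3$ so that the auxiliary element $t$ exists.
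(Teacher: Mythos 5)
Your proof is correct and follows essentially the same route as the paper's: form the size-$s$ triple by deleting $p'$, choose an auxiliary target element in place of $p'$, invoke $L_n(s,k,k+1)$, and use pigeonhole on the $k+1$ chain elements to absorb the one previously-excluded point into the monochromatic image. Your writeup is in fact slightly more careful than the paper's in spelling out why $h(S)\setminus\{y_2\}=h(S_0)$ and why the collision color matches $\chi(q_a)$.
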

\begin{proof}
	Let $N=L_n(s, k, k+1)$ and $\chi:A(N)\to [k]$ be a $k$-coloring.
	Let $(S'; p, p')$ be an admissible triple in $A(n)$ with $|S'|=s+1$.
	We will show that there are $b<b'$ in $A(N)$ such that there is a $((S'; p\to b, p'\to b'), \chi)$-restricted lattice homothety from $A(n)$ to $A(N)$.

	Let $S = S'\setminus\set{p'}$ and $m'$ be an arbitrary element in $S$ different from $p$.
	Such an $m'$ exists since $|S'|=s+1>2$.
	By definition of $N$, there is a sequence $q_1< \cdots < q_{k+1}$ in $A(N)$ such that for each $1\leq i<j\leq k+1$ there is a $((S; p\to q_i, m'\to q_j), \chi)$-restricted lattice homothety $h^{ij}$ from $A(n)$ to $A(N)$.
	By the pigeonhole principle there are $q_i$ and $q_j$ such that $\chi(q_i) =\chi(q_j)$.
	Thus the image of $S$ under the corresponding restricted lattice homothety $h^{ij}$ is $\chi$-monochromatic in $A(N)$.
	Write $b = q_i$ and $b' = h^{ij}(p')$.
	Then we have $h_{ij}$ is a $((S'; p\to b, p'\to b'), \chi)$-restricted lattice homothety from $A(n)$ to $A(N)$ and we are done.
\end{proof}

Fix $n\geq s\geq 2$ and let us write $L_n(s, *, *)$ to mean that $L_n(s, k, \ell)$ exists for all $k$ and all $\ell\geq 2$.
Similarly, we write $L_n(s, *, \ell)$ and $L_n(s, k, *)$.

\begin{lemma}
	\label{lemma:precursor to main theorem base case}
	$L_n(2, *, *)$ is true.
\end{lemma}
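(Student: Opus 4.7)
The plan is to prove that $L_n(2, k, \ell)$ exists for all $k \geq 1$ and all $\ell \geq 2$ by induction on $\ell$, with a trivial base case at $\ell = 2$ and Lemma \ref{lemma:step 1} driving the inductive step.

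First I would observe a crucial simplification. When $s = 2$, an admissible triple in $A(n)$ has the form $(\{p, p'\}; p, p')$ with $p < p'$, so for any candidate homothety $f$ with $f(p) = q_i$ and $f(p') = q_j$ the set $f(S) \setminus \{q_j\} = \{q_i\}$ is a singleton and hence automatically $\chi$-monochromatic. Thus the coloring hypothesis plays no role when $s = 2$: the existence of $L_n(2, k, \ell)$ is equivalent to producing, for every $p < p'$ in $A(n)$, some $N$ and a chain $q_1 < \cdots < q_\ell$ in $A(N)$ together with restricted lattice homotheties $f_{ij} : A(n) \to A(N)$ satisfying $f_{ij}(p) = q_i$ and $f_{ij}(p') = q_j$ for all $1 \leq i < j \leq \ell$.

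The base case $\ell = 2$ is then immediate: take $N = n$, $q_1 = p$, $q_2 = p'$, and let $f_{12}$ be the identity map on $A(n)$, which lies in $H_{nn}$ by axiom H1. This shows $L_n(2, k, 2) \leq n$ for every $k \geq 1$.

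For the inductive step, suppose that $L_n(2, k, \ell)$ exists for every $k \geq 1$. Fix $k \geq 1$ and set $N_1 := L_n(2, k, \ell)$; then applying the base case with $k$ replaced by $k^{B(N_1)}$ shows that $L_n(2, k^{B(N_1)}, 2)$ also exists. Lemma \ref{lemma:step 1} now produces $L_n(2, k, \ell + 1)$, completing the induction. There is essentially no obstacle in the argument: the whole point is that for $s = 2$ the monochromaticity condition is vacuous, which is exactly what allows the identity homothety to serve as a trivial base case and unlock the recursion of Lemma \ref{lemma:step 1}.
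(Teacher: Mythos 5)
Your proof is correct and follows essentially the same route as the paper: note that for $s=2$ the set $f(S)\setminus\{q'\}$ is a singleton so the monochromaticity requirement is vacuous, establish $L_n(2,*,2)$ by exhibiting a single restricted lattice homothety out of $A(n)$, and then iterate Lemma \ref{lemma:step 1}. The only difference is in the base case, where you take the identity map on $A(n)$ (a restricted lattice homothety by H1) with $N=n$, $q_1=p$, $q_2=p'$, whereas the paper composes the diagonal $A(n)\to A(n)\times A(n)$ with a compatible homothety into some $A(N)$ using (C1), (H1), (H2); both constructions are valid and yours is slightly more direct.
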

\begin{proof}
	First we show that $L_n(2, *, 2)$ is true.
	Let $k\geq 1$ be arbitrary.
	We need to show that $L_n(2, k, 2)$ exists.
	To show this all we need to show is that there is a restricted lattice homothety from $A(n)$ to some $A(N)$.
	To this end, by definition of a good sequence, we know that there is $N$ such that there is a lattice homothety $\textbf{i}$ from $A(n)\times A(n)$ into $A(N)$ which is compatible with $\mc H$.
	By using (C1), (H1), and (H2), we see that the composition of $\textbf{i}$ with the diagonal embedding $A(n)\to A(n)\times A(n)$ is a restricted lattice homothety $A(n)\to A(N)$.
	Now the truth of $L_n(2, *, *)$ follows by repeated applications of Lemma \ref{lemma:step 1}.
\end{proof}

\begin{lemma}
	\label{lemma:precursor to the main theorem}
	Fix $2\leq s\leq |A(n)|$.
	Then $L_n(s, *, *)$ is true.
\end{lemma}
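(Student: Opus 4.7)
The plan is to prove the lemma by induction on $s$, using the previously established building blocks. The base case $s = 2$ is precisely Lemma \ref{lemma:precursor to main theorem base case}. For the inductive step, I would assume $L_n(s, *, *)$ is true for some $2 \le s$ with $s+1 \le |A(n)|$, and derive $L_n(s+1, *, *)$.

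The inductive step itself breaks into two sub-steps, which is where the two previous lemmas get combined. First, I would use Lemma \ref{lemma:step 2} to obtain $L_n(s+1, *, 2)$: for any $k \ge 1$, the induction hypothesis tells us that $L_n(s, k, k+1)$ exists, and Lemma \ref{lemma:step 2} then directly delivers $L_n(s+1, k, 2)$. Since $k$ was arbitrary, we get $L_n(s+1, *, 2)$. Second, I would upgrade this to $L_n(s+1, *, \ell)$ for every $\ell \ge 2$ by a secondary induction on $\ell$, using Lemma \ref{lemma:step 1}. At the inductive step, assuming $L_n(s+1, k', \ell)$ exists for every $k'$, the hypothesis of Lemma \ref{lemma:step 1} requires both $L_n(s+1, k, \ell)$ and $L_n(s+1, k^{B(N_1)}, 2)$ to exist; the first holds by the inner inductive hypothesis and the second by the fact that $L_n(s+1, *, 2)$ has already been established. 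Thus Lemma \ref{lemma:step 1} hands us $L_n(s+1, k, \ell+1)$, closing the inner induction and giving $L_n(s+1, *, *)$.

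There is no serious obstacle here; the work was done in Lemmas \ref{lemma:step 1}, \ref{lemma:step 2}, and \ref{lemma:precursor to main theorem base case}, and this lemma is really a bookkeeping statement that packages the double induction. The only thing one has to be careful about is the index range: the outer induction proceeds while $s+1$ still fits as the size of an admissible triple in $A(n)$, i.e.\ while $s+1 \le |A(n)|$, so the induction terminates exactly at the upper bound asserted in the statement. It is also worth observing that in the inner induction on $\ell$, the coloring parameter must be allowed to vary, which is why the inductive hypothesis is phrased as $L_n(s+1, *, \ell)$ (all $k$) rather than for a fixed $k$; otherwise Lemma \ref{lemma:step 1} could not be iterated because it replaces $k$ by the much larger $k^{B(N_1)}$ at each step.
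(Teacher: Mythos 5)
Your proposal is correct and follows essentially the same double-induction structure as the paper's proof: the base case $s=2$ from Lemma \ref{lemma:precursor to main theorem base case}, the outer step on $s$ via Lemma \ref{lemma:step 2}, and the inner step on $\ell$ via Lemma \ref{lemma:step 1}, with the same observation that the inner hypothesis must hold for all color counts because Lemma \ref{lemma:step 1} invokes $L_n(s+1, k^{B(N_1)}, 2)$.
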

\begin{proof}
	We prove this by induction.
	Lemma \ref{lemma:precursor to main theorem base case} shows that $L_n(2, *, *)$ is true.
	Suppose $L_n(i, *, *)$ is true for $i=2, 3, \ldots, s<|A(n)|$.
	We want to show that $L_n(s+1, *, \ell)$ is true for all $\ell\geq 2$.
	By Lemma \ref{lemma:step 2}, the truth of $L_n(s, *, *)$ implies that $L_n(s+1, *, 2)$ is true.
	Suppose we have shown that $L_n(s+1, *, j)$ is true for $j=2, \ldots, \ell$ for some $\ell\geq 2$.
	Let $k\geq 1$ be arbitrary.
	The truth of $L_n(s+1, *, \ell)$ and $L_n(s+1, *, 2)$ in particular implies that $N:=L_n(s+1, k, \ell)$ and $L_n(s+1, k^{B(N)}, 2)$ exist.
	Thus by Lemma \ref{lemma:step 1} we know that $L_n(s+1, k, \ell+1)$ exists.
	Since $k$ was arbitrary, this means that $L_n(s+1, *, \ell+1)$ is true, finishing the proof.
\end{proof}

\begin{proof}[Proof of Theorem \ref{theorem:main}]
	Let $N=L_n(n, k, k+1)$, which exists by Lemma \ref{lemma:precursor to the main theorem}.
	Fix an arbitrary $k$-coloring $\chi:A(N)\to [k]$.
	Let $S=A(n)$ and $p$ and $p'$ be the minimum and maximum elements of $A(n)$ respectively.
	Thus $(S; p, p')$ is an admissible triple in $A(n)$.
	By definition of $N$, there is a sequence $q_1< \cdots < q_{k+1}$ in $A(N)$ such that for each $1\leq i<j\leq k+1$ there is a $((S; p\to q_i, p'\to q_j), \chi)$-restricted lattice homothety $h$ from $A(n)$ to $A(N)$.
	By the pigeonhole principle, there are $i<j$ such that $q_i$ and $q_j$ receive the same color under $\chi$.
	It is now clear that $h^{ij}$ is the desired restricted lattice homothety.
\end{proof}

\section{Application: The Hales-Jewett Theorem}
\label{section:hales jewett theorem}

Let $t$ and $n$ be positive integers.
Let $N$ be a positive integer and $\mc S=\set{S_1, \ldots, S_n}$ be a collection of pairwise disjoint subsets of $[N]$ with $|S_1| = \cdots = |S_n|$.
Let $\alpha:[N]\to \set{0,\ldots, t-1}$ be a function that vanishes on $\bigcup \mc S$.
We write $\vp_{\mc S, \alpha}$ to mean the function $\chain_t(n)\to \chain_t(N)$ defined as
\begin{equation}
	\vp_{\mc S, \alpha}(a) = a_11_{S_1} + \cdots + a_n1_{S_n} + \alpha
\end{equation}
for all $a=(a_1, \ldots, a_n)\in \chain_t(n)$.
Note that $\vp_{\mc S, \alpha}$ is a lattice homothety with scale factor $k$, where $k$ is the cardinality of any of the $S_i$'s.
We say that a lattice homothety $h:\chain_t(n)\to \chain_t(N)$ is of \define{type HJ} if there exists a family $\mc S$ of pairwise disjoint subsets of $[N]$ of equal size with $|\mc S| = n$, and a function $\alpha:[N]\to \set{0, \ldots, t-1}$ which vanishes on $\bigcup S$, such that $h=\vp_{\mc S, \alpha}$.

Now suppose $n_1\leq n_2\leq n_3$ are positive integers and $h_{12}:\chain_t(n_1)\to \chain_t(n_2)$ and $h_{23}:\chain_t(n_2)\to \chain_t(n_3)$ are homotheties of type HJ.
We will show that the composite $h:=h_{23}\circ h_{12}$ is also a homothety of type HJ.
Let $h_{12}=\vp_{\mc S, \alpha}$ and $h_{23} = \vp_{\mc T, \beta}$, where $\mc S=\set{S_1, \ldots, S_{n_1}}$ with $|S_1|= \cdots  = |S_{n_1}|$ and $\alpha$ vanishes on $\bigcup \mc S$, and $\mc T=\set{T_1, \ldots, T_{n_2}}$ with $|T_1| = \cdots = |T_{n_2}|$ and $\beta$ vanishes on $\bigcup \mc T$.

Define $\gamma = h(0, \ldots, 0)$ and for each $i\in [n_1]$ define $U_i$ as $\bigcup_{p\in S_i}T_p$.
It is then readily checked that all the $U_i$'s are of the same size and are pairwise disjoint, $\gamma$ vanishes on $\bigcup_i U_i$, and 
\begin{equation}
	h(a_1, \ldots, a_{n_1}) = a_11_{U_1} + \cdots +a_{n_1}1_{U_{n_1}} + \gamma
\end{equation}
What we have shown is the following

\begin{lemma}
	Let $t$ be a positive integer and $H_{ij}$ be the set of all the homotheties of type HJ from $\chain_t(i)$ to $\chain_t(j)$.
	Then the family $\mc H=\set{H_{ij}:\ i, j\geq 1}$ is a homothety system on the sequence
	$$\chain_t(1), \chain_t(2), \chain_t(3), \ldots$$
	We will refer to this homothety system as the \define{HJ homothety system} on the sequence of chain lattices.
\end{lemma}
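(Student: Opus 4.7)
The plan is to verify the two axioms (H1) and (H2) of a homothety system for the family $\mc H$ of type-HJ homotheties on the sequence $\chain_t(1), \chain_t(2), \ldots$. The composition axiom (H2) has essentially already been established in the discussion preceding the lemma statement, so the remaining content of the proof is to verify (H1) together with a brief remark to harvest (H2) from the preceding computation.

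For (H1), I would exhibit the identity map $\mathrm{id} : \chain_t(n) \to \chain_t(n)$ explicitly as some $\vp_{\mc S, \alpha}$. Take $\mc S = \{\{1\}, \{2\}, \ldots, \{n\}\}$, which is a pairwise disjoint family of subsets of $[n]$ all of the same size (namely $1$), and $|\mc S| = n$. Let $\alpha : [n] \to \{0, \ldots, t-1\}$ be the zero function, which trivially vanishes on $\bigcup \mc S = [n]$. Then for any $a = (a_1, \ldots, a_n) \in \chain_t(n)$,
\begin{equation}
\vp_{\mc S, \alpha}(a) = a_1 \mathbf{1}_{\{1\}} + \cdots + a_n \mathbf{1}_{\{n\}} + 0 = a,
\end{equation}
so $\mathrm{id} = \vp_{\mc S, \alpha}$ is a homothety of type HJ, placing it in $H_{nn}$.

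For (H2), given $\vp \in H_{ij}$ and $\psi \in H_{jk}$ (so $i \leq j \leq k$ in the sense that these are type-HJ maps between the corresponding chain lattices), I would simply invoke the computation done in the paragraph immediately preceding the lemma: writing $\vp = \vp_{\mc S, \alpha}$ and $\psi = \vp_{\mc T, \beta}$, defining $U_i := \bigcup_{p \in S_i} T_p$ and $\gamma := (\psi \circ \vp)(0, \ldots, 0)$, the $U_i$ are pairwise disjoint and of a common size, $\gamma$ vanishes on $\bigcup_i U_i$, and $\psi \circ \vp = \vp_{\{U_1, \ldots, U_{n_1}\}, \gamma}$, so $\psi \circ \vp \in H_{ik}$.

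There is no significant obstacle here; the only point that warrants a sentence of care is ensuring in (H1) that the singletons $\{1\}, \ldots, \{n\}$ genuinely meet the definition of $\vp_{\mc S, \alpha}$ (equal size, pairwise disjoint, correct cardinality of $\mc S$), and that the zero function vanishes on the union, both of which are immediate. The main substance has already been done in the preceding discussion; the lemma is essentially a packaging statement.
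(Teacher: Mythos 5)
Your proposal is correct and matches the paper's approach: the paper establishes (H2) in the computation immediately preceding the lemma statement and treats the lemma as a summary, leaving (H1) implicit, while you make the (immediate) verification of (H1) explicit via $\mc S = \set{\set{1},\ldots,\set{n}}$ and $\alpha = 0$. No gaps.
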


\begin{lemma}
	\label{lemma:rigidity of lattice homotheties}
	The trivial homothety system and the HJ homothety system coincide on the sequence of Boolean lattices.
\end{lemma}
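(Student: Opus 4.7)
The plan is to show the nontrivial inclusion: every lattice homothety $h:\bool(n)\to \bool(N)$ is of type HJ (the reverse inclusion is immediate from the construction of $\vp_{\mc S,\alpha}$). Fix such an $h$ with scale factor $d$, and read off the HJ data directly: let $\alpha := h(\emptyset)$, viewed as a subset of $[N]$ (equivalently, its indicator function), and $S_i := h(\set{i})\setminus \alpha$ for each $i\in [n]$. The goal is to verify that $\mc S = \set{S_1,\ldots,S_n}$ together with $\alpha$ witnesses $h = \vp_{\mc S,\alpha}$.

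Each of the required HJ properties is a one-line consequence of a lattice homothety axiom. \textbf{Common size:} the scale-factor condition applied to $\emptyset < \set{i}$ gives $|h(\set{i})| - |\alpha| = d$, so $|S_i| = d$ for all $i$. \textbf{Pairwise disjointness and vanishing of $\alpha$:} for $i\neq j$ meet-preservation gives
\[
h(\set{i}) \cap h(\set{j}) = h(\set{i}\wedge \set{j}) = h(\emptyset) = \alpha,
\]
which forces $S_i\cap S_j = \emptyset$; combined with the tautology $\alpha \cap S_i = \emptyset$, this yields both the pairwise disjointness of $\mc S$ and the vanishing of $\alpha$ on $\bigcup \mc S$. \textbf{The formula for $h$:} any $A\subseteq [n]$ equals the join $\bigvee_{a\in A}\set{a}$ in $\bool(n)$, so join-preservation gives
\[
h(A) = \bigcup_{a\in A} h(\set{a}) = \alpha \cup \bigcup_{a\in A} S_a,
\]
which is precisely $\vp_{\mc S,\alpha}(A)$ once one translates from subsets of $[N]$ to $\set{0,1}$-valued functions.

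There is no substantive obstacle; the entire argument hinges on the fact that every element of $\bool(n)$ is a join of its atoms, which forces $h$ to be determined by its values on $\emptyset$ and on the singletons. This is exactly where $t=2$ is special: for $t\geq 3$ the chain lattice $\chain_t(n)$ is not join-generated by its atoms (for instance the element of $\chain_3(1)$ with value $2$ is not a join of atoms), and one can exhibit lattice homotheties $\chain_t(n)\to \chain_t(N)$ that are not of type HJ, so the analogous equality cannot be hoped for beyond the Boolean case.
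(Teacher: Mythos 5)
Your proof is correct, but it takes a genuinely different and more direct route than the paper. The paper argues by induction on $n$: it restricts $h$ to the two copies $\set{0,1}^n\times\set{0}$ and $\set{0,1}^n\times\set{1}$ of $\bool(n)$ inside $\bool(n+1)$, applies the inductive hypothesis to each, and then uses a rigidity argument on restrictions to $\bool(2)$ to show that the two resulting families $\mc S$ and $\mc T$ coincide and that $\beta=1_Q+\alpha$, from which the HJ form of $h$ is assembled. You instead exploit the fact that $\bool(n)$ is atomistic: setting $\alpha=h(\emptyset)$ and $S_i=h(\set{i})\setminus\alpha$, the meet-preservation identity $h(\set{i})\cap h(\set{j})=h(\emptyset)$ gives disjointness, the scale-factor condition gives $|S_i|=d$, and join-preservation over $A=\bigvee_{a\in A}\set{a}$ recovers $h=\vp_{\mc S,\alpha}$ with no induction at all. (The only pedantic point: for $A=\emptyset$ the identity $h(A)=\bigcup_{a\in A}h(\set{a})$ should be read as the defining equation $h(\emptyset)=\alpha$ rather than as an instance of binary join-preservation, but this is immediate.) Your approach is shorter and isolates the real mechanism --- that $h$ is determined by its values on the bottom element and the atoms --- and your closing remark correctly identifies why the argument, and indeed the statement, fails for $\chain_t$ with $t\geq 3$, consistent with the counterexample the paper gives right after this lemma.
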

\begin{proof}
	We prove this by induction.
	The lemma is clearly true for $n=1, 2$.
	Suppose the lemma holds for $1, \ldots, n$ and let $h:\bool(n+1)\to \bool(N)$ be a lattice homothety.
	Let $k$ be the scale factor of $h$.
	Define
	\begin{equation}
		h_0 = h|\set{0, 1}^n\times\set{0}\quad \text{and} \quad h_1 = h|\set{0, 1}^n\times\set{1}
	\end{equation}
	Then both $h_1$ and $h_2$ can be thought of as lattice homotheties from $\bool(n)$ to $\bool(N)$.
	By induction, there are families $\mc S=\set{S_1, \ldots, S_n}$ and $\mc T=\set{T_1, \ldots, T_n}$ of pairwise disjoint subsets of $[N]$, and function $\alpha, \beta:[N]\to \set{0, 1}$ which vanish on $\bigcup\mc S$ and $\bigcup\mc T$ respectively such that
	\begin{equation}
		h(a_1, \ldots, a_n, 0) = h_0(a_1, \ldots, a_n) = a_11_{S_1} + \cdots +a_n1_{S_n} + \alpha
	\end{equation}
	\begin{equation}
		h(a_1, \ldots, a_n, 1) = h_1(a_1, \ldots, a_n) = a_11_{T_1} + \cdots + a_n1_{T_n} + \beta
	\end{equation}
	for all $(a_1, \ldots, a_n)\in \bool(n)$.
	Thus
	\begin{equation}
		\label{equation:lattice homothety lemma equation}
		h(a_1, \ldots, a_n, a_{n+1}) = (1-a_{n+1})h_0 + a_{n+1} h_1
	\end{equation}
	Since $h$ has scale factor $k$, each $S_i$ and each $S_j$ has size $k$.

	We claim that $S_i=T_i$ for all $i$.
	To this end, note that the restriction of $h$ to $\set{0, 1}\times \set{0}\times \cdots \times \set{0}\times \set{0, 1}$ can be thought of a lattice homothety from $\bool(2)$ to $\bool(N)$ having scale factor $k$.
	Thus there exist two disjoint sets $P$ and $Q$ of $[N]$ of size $k$, and function $\gamma:[N]\to \set{0, 1}$ which vanishes on $P\cup Q$, such that
	\begin{equation}
		h(a_1, 0, \ldots, 0, a_{n+1}) = a_11_{P} + a_{n+1} 1_{Q} + \gamma
	\end{equation}
	Using Equation \ref{equation:lattice homothety lemma equation} we have
	\begin{equation}
		a_11_{P} + \gamma = h(a_1,0, \ldots, 0) = a_11_{S_1} + \alpha
	\end{equation}
	and
	\begin{equation}
		a_11_P + 1_Q + \gamma = h(a_1, 0, \ldots, 0, 1) = a_11_{T_1} + \beta
	\end{equation}
	which give that $S_1 = P = T_1$.
	It similarly follows that $S_i=T_i$ for all $i$.
	The above working also shows that $\beta = 1_Q+\gamma = 1_Q + \alpha$.

	Thus we have
	\begin{equation}
		h(a_1, \ldots, a_{n+1}) = a_11_{S_1} + \cdots a_{n}1_{S_n} + a_{n+1}1_Q + \alpha
	\end{equation}
	Thus $S_i\cap Q$ is empty for each $i$ and $\alpha$ vanishes on $Q\cup S_1\cup \cdots \cup S_n$.
	This shows that $h=\vp_{\mc F, \alpha}$ where $\mc F=\set{S_1, \ldots, S_n, Q}$ and we are done.
\end{proof}

\begin{example}
	\emph{We show that the trivial homothety system and HJ homothety system may not be the same for chain lattices.}
	Consider the homothety $h:\chain_3(2)\to \chain_3(4)$ indicated by the following figure.
	\begin{figure}[H]
		\centering
		\includegraphics{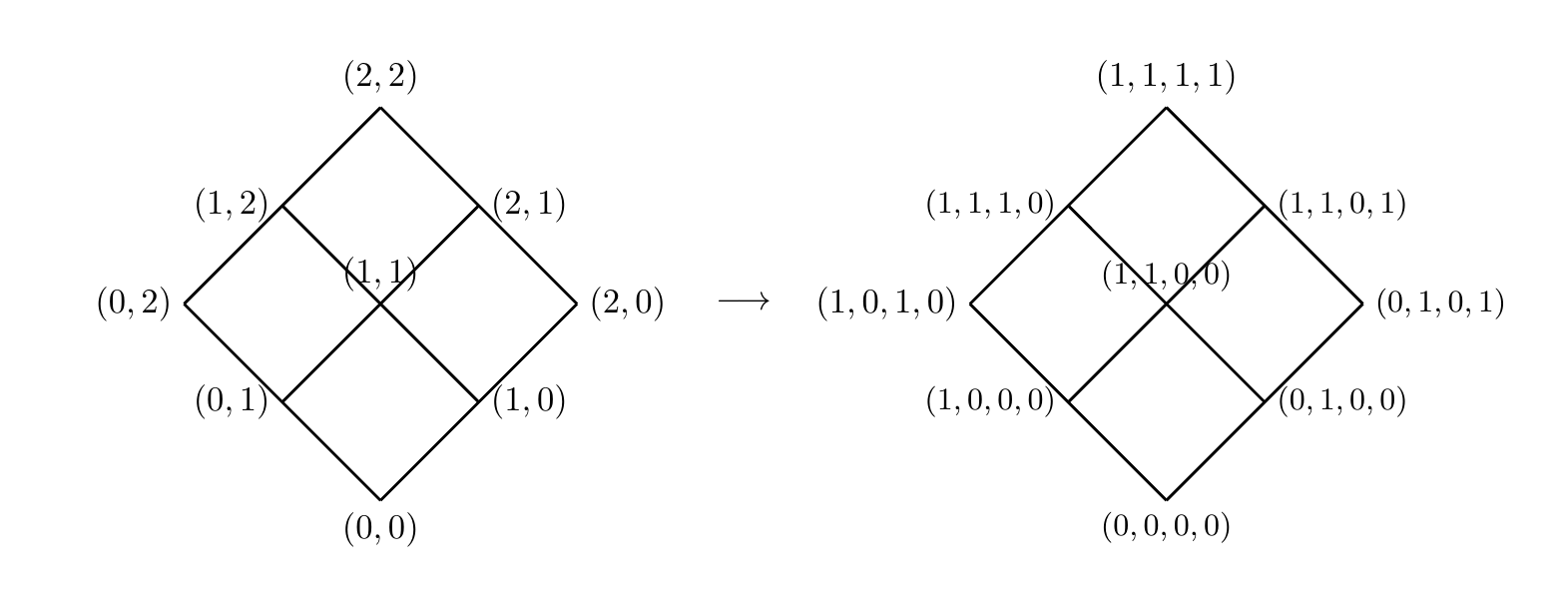}
		\caption{A homothety from $\chain_3(2)$ to $\chain_3(4)$.}
	\end{figure}
	\noindent
	It is easily checked that this is indeed a lattice homothety but is not of type HJ.
\end{example}

\begin{lemma}
	\label{lemma:chain latticec with hj system is good}
	The sequence of chain lattices equipped with the HJ homothety system is a good sequence.
\end{lemma}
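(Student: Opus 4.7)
The plan is to take $N := N_1 + N_2$ and use the natural concatenation map $\textbf{i}: \chain_t(N_1) \times \chain_t(N_2) \to \chain_t(N)$, where I identify $[N]$ with $[N_1] \sqcup \set{N_1+1, \ldots, N}$, and $\textbf{i}(f, g)$ agrees with $f$ on $[N_1]$ and with a shifted copy of $g$ on the complementary block. Example \ref{example:chain lattice is good} already shows $\textbf{i}$ is a lattice homothety, so the remaining task is only to verify the compatibility conditions (C1) and (C2) with respect to the HJ homothety system.

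Condition (C2) is essentially immediate. For fixed $p \in \chain_t(N_1)$, the map $y \mapsto \textbf{i}(p, y)$ equals $\vp_{\mc S, \alpha}$, where $\mc S$ consists of the singletons $\set{N_1+1}, \ldots, \set{N}$ and $\alpha$ is the extension of $p$ by $0$ on the complementary block; this is visibly of type HJ, and the other slot is handled symmetrically.

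The substance of the proof lies in (C1). Given a lattice homothety $\vp: \chain_t(i) \to \chain_t(N_1) \times \chain_t(N_2)$ whose two projections are both of type HJ, I would write $\mathrm{proj}_1 \circ \vp = \vp_{\mc S, \alpha}$ with $\mc S = \set{S_1, \ldots, S_i}$ of common block size $d_1$, and $\mathrm{proj}_2 \circ \vp = \vp_{\mc T, \beta}$ with $\mc T = \set{T_1, \ldots, T_i}$ of common block size $d_2$. Let $T_j'$ denote $T_j$ shifted into $\set{N_1+1, \ldots, N}$, set $U_j := S_j \sqcup T_j'$, and let $\gamma$ be the concatenation of $\alpha$ and $\beta$ across the two blocks of $[N]$. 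Then $\mc U := \set{U_1, \ldots, U_i}$ is a family of pairwise disjoint subsets of $[N]$ of common size $d_1 + d_2$, $\gamma$ vanishes on $\bigcup_j U_j$, and a direct unfolding on an element $a = (a_1, \ldots, a_i) \in \chain_t(i)$ shows that $\textbf{i} \circ \vp = \vp_{\mc U, \gamma}$, which is of type HJ.

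The only real subtlety is the matching of block indices: the alignment $S_j \leftrightarrow T_j \leftrightarrow a_j$ is exactly what keeps the combined blocks $U_j$ equal-sized, and this alignment is automatic because both projections arise from the same map $\vp$ on the common domain $\chain_t(i)$, so the coefficient of $a_j$ in each projection must be the indicator of the block indexed by $j$. Once this observation is in place, (C1) reduces to routine bookkeeping and the lemma follows.
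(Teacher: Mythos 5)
Your proposal is correct and follows essentially the same route as the paper: the concatenation map $\chain_t(N_1)\times\chain_t(N_2)\to\chain_t(N_1+N_2)$, verification of (C2) via singleton blocks with the fixed coordinate absorbed into $\alpha$, and verification of (C1) by taking disjoint unions $U_j = S_j\sqcup T_j'$ of the corresponding blocks and concatenating the shift functions. Your closing remark about the index alignment being forced because both projections come from the same map on $\chain_t(i)$ is a point the paper leaves implicit, but the argument is the same.
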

\begin{proof}
	Let $t\geq 1$ be fixed.
	Let $F$ and $G$ be disjoint non-empty sets and consider the map
	$\textbf{i}:\chain_t(F)\times \chain_t(G)\to \chain_t(F\sqcup G)$ which takes $(f, g)$ in $\chain_t(F)\times \chain_t(G)$ to the map $F\sqcup G\to \set{0, \ldots, t-1}$ defined as
	\begin{equation}
		x\mapsto
		\left\{
		\begin{array}{lcl}
			f(x) \text{ if } x\in F\\
			g(x) \text{ if } x\in G
		\end{array}
		\right.
	\end{equation}
	For any function $\psi:G\to \set{0, \ldots, t-1}$, let use write $\tilde \psi$ to denote the map $F\sqcup G\to \set{0, \ldots, t-1}$ which vanishes on $F$ and takes $y\in G$ to $\psi(y)$.
	Similarly, for any map $\vp:F\to \set{0, \ldots, t-1}$ we define $\tilde \vp:F\sqcup G\to \set{0, \ldots, t-1}$.

	For $g_0\in \chain_t(G)$ fixed, we see that the map $\textbf{i}_{g_0}:\chain_t(F)\to \chain_t(F\sqcup G)$ takes $f\in \chain_t(F)$ to $\sum_{i\in F} f(i) 1_{\set{i}} + \tilde g_0$.
	Since $\tilde g_0$ vanishes on $F$, we see that $\textbf{i}_{g_0}$ is a homothety of type HJ.
	Similarly, $i_{f_0}:\chain_t(F)\to \set{0, \ldots, t-1}$ is a homothety of type HJ for each $f_0\in \chain_t(F)$.

	Finally, let $S$ be a finite set and $h_F:\chain_t(S)\to \chain_t(F)$ and $h_G:\chain_t(S)\to \chain_t(G)$ be homotheties of type HJ.
	Consider the map $h:\chain_t(S)\to \chain_t(F)\times \chain_t(G)$ defined as $h(s) = (h_F(s), h_G(s))$.
	
	We will show that $\textbf{i}\circ h$ is a homothety of type HJ.
	Let $|S|= n$.
	Since $h_F$ is a homothety of type HJ, there exists a family $\mc F=\set{F_i:\ i\in S}$ of pairwise disjoint subsets of $F$ of equal size, and a function $\alpha:F\to \set{0, \ldots, t-1}$ vanishing on $\bigcup \mc F$ such that $h_F=\vp_{\mc F, \alpha}$.
	Similarly, there is a family $\mc G=\set{G_i:\ i\in S}$  of pairwise disjoint subsets of $G$ of equal size, and a function $\beta:G\to \set{0, \ldots, t-1}$ vanishing of $\bigcup \mc G$ such that $h_G = \vp_{\mc G, \beta}$.
	It is readily checked that
	\begin{equation}
		\textbf{i}\circ h(s) = \sum_{i\in S} s(i) 1_{F_i\sqcup G_i} + \tilde \alpha + \tilde \beta
	\end{equation}
	Let $\mc E = \set{F_i\cup G_i:\ i\in S}$.
	It is clear that $\gamma:=\tilde \alpha+\tilde \beta$ vanihses on $\bigcup \mc E$ and that all elements of $\mc E$ are of equal size.
	Thus $\textbf{i}\circ h = \vp_{\mc E, \gamma}$ and we are done.
\end{proof}

\begin{corollary}
	\textit{\textbf{Hales-Jewett Theorem.}}
	Let $n, t$ and $k$ be positive integers.
	Then there is $N$ such that whenever $\chain_t(N)$ is colored with $k$ colors, there is a lattice homothety $h:\chain_t(n)\to \chain_t(N)$ of type HJ whose image is monochromatic.
\end{corollary}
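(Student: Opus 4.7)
The plan is to deduce this corollary directly from Theorem \ref{theorem:main} by applying it to the sequence of chain lattices $\chain_t(1), \chain_t(2), \chain_t(3), \ldots$ equipped with the HJ homothety system. All of the required hypotheses have already been verified in the preceding lemmas, so the argument should amount to little more than an assembly of what is in place.

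First I would note that by the lemma stated just before Lemma \ref{lemma:rigidity of lattice homotheties}, the collection $\mc H = \set{H_{ij}}$ of HJ homotheties between $\chain_t(i)$ and $\chain_t(j)$ is a homothety system, i.e.\ it satisfies H1 (the identity is an HJ homothety, taking $\mc S = \set{\set{1}, \ldots, \set{i}}$ and $\alpha = 0$) and H2 (composition of HJ homotheties is HJ). Second, Lemma \ref{lemma:chain latticec with hj system is good} shows that the sequence $\chain_t(1), \chain_t(2), \ldots$ is good relative to $\mc H$: for any $n_1, n_2$, taking $F = [n_1]$ and $G = [n_1+1, n_1+n_2]$, the concatenation map $\textbf{i}:\chain_t(F) \times \chain_t(G) \to \chain_t(F \sqcup G)$ is a lattice homothety, and the proof of that lemma verifies conditions C1 and C2 for $\textbf{i}$ with respect to the HJ homothety system.

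Having confirmed these hypotheses, I would apply Theorem \ref{theorem:main} with the given $n$ and $k$ to the sequence $\chain_t(1), \chain_t(2), \ldots$ with homothety system $\mc H$. This produces a positive integer $N$ such that every $k$-coloring $\chi: \chain_t(N) \to [k]$ admits a restricted lattice homothety $h: \chain_t(n) \to \chain_t(N)$ whose image is $\chi$-monochromatic. By the definition of the HJ homothety system, ``$\mc H$-restricted'' is the same as ``of type HJ'', so $h$ is of type HJ, which is exactly the conclusion of the Hales--Jewett theorem.

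Since every step of the reduction has already been executed in the preceding material, there is no substantive obstacle to overcome; the only work is to state the translation between the abstract setup (restricted lattice homotheties between the $A(i)$) and the concrete one (type HJ maps between the chain lattices $\chain_t(i)$), so as to make clear that the $N$ produced by Theorem \ref{theorem:main} is the $N$ demanded by the Hales--Jewett statement.
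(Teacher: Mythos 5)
Your proposal is correct and follows exactly the paper's own argument: the paper proves this corollary by citing Lemma \ref{lemma:chain latticec with hj system is good} (goodness of the chain lattices under the HJ homothety system) together with Theorem \ref{theorem:main}, which is precisely the assembly you describe. The extra detail you supply (checking H1, H2, and the identification of ``restricted'' with ``type HJ'') only makes explicit what the paper leaves implicit.
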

\begin{proof}
	This follows immediately from Lemma \ref{lemma:chain latticec with hj system is good} and Theorem \ref{theorem:main}.
\end{proof}

\bibliographystyle{alpha}
\bibliography{RA.bib}
\end{document}